\documentclass{amsart}

\usepackage{amssymb}

\usepackage{url} 

\usepackage{times}

\newtheorem{thm}{Theorem}[section]

\newtheorem{prop}[thm]{Proposition}
\newtheorem{lem}[thm]{Lemma}

\theoremstyle{remark}
\newtheorem{rem}[thm]{Remark}

\theoremstyle{definition}
\newtheorem{defi}[thm]{Definition}

\numberwithin{equation}{section}
\numberwithin{thm}{section}

\newcommand{\FF}{\mathcal{F}}
\newcommand{\R}{\mathbb{R}}

\newcommand{\lcp}{ \mathop{\mathrm{l.c.p.}}}
\newcommand{\sm}{{\,\cup\,}}

\def\trans{{\mathbin{\cap{\mkern-8mu}\mid}\,}}
\newcommand{\mycat}{\mathop{\mathrm{cat}}}
\newcommand{\cat}{\mycat_\trans (M,\FF)}
\newcommand{\scat}{\mycat^s_\trans (M,\FF)}

\newcommand{\codim}{\mathop{ \mathrm{codim} }}
\newcommand{\id}{\mathrm{id}}

\hyphenation{Lus-ter-nik--Schni-rel-mann}


\begin{document}

\subjclass[2000]{Primary 57R30, 55M30. Secondary  55T99}
\keywords{Foliation,  Lusternik-Schnirelmann category, transverse LS
category, spectral sequence, cup product}

\title[A cohomological lower bound]{A cohomological lower bound
for the transverse 
LS~category of a foliated manifold}

\author{E. Mac\'{\i}as-Virg\'os}
\address{
Departamento de Xeometria e Topoloxia. 
Facultade de Matem\'aticas\\
Universidade de Santiago de Compostela\\
15782-Spain.
}

\email{quique.macias@usc.es}

\thanks{Partially supported by FEDER and   Research Project MTM2008-05861 MICINN Spain.}


\begin{abstract} Let $\mathcal{F}$ be a compact Hausdorff foliation on a compact manifold. \\ Let ${E_2^{>0,\bullet}}=\oplus\{E_2^{p,q}\colon p>0,q\geq 0\}$ be the subalgebra of cohomology classes with positive transverse
degree in  the $E_2$ term of the spectral sequence of the foliation. We prove that the saturated transverse Lusternik-Schnirelmann category of $\mathcal{F}$ is bounded below by the length of the cup product in    ${E_2^{>0,\bullet}}$. Other cohomological bounds are discussed.

\end{abstract}

\maketitle

 
\section{Introduction}
The {\em transverse} Lusternik--Schnirelmann category $\cat$ of a foliated
manifold $(M,\FF)$
was introduced in H. Colman's thesis 
\cite{COLMAN_Thesis, COLMAN_MACIAS_Topology}. This  notion (and the analogous one of saturated transverse category) has been studied by several authors in the last years \cite{COLMAN_Taia, 
COLMAN_HURDER_Transactions, HURDER_Taia, 
HURDER_WALCZAK, LANGEVIN_WALCZAK, WOLAK}.

In \cite{COLMAN_Thesis,COLMAN_MACIAS_Topology}
a lower bound for $\cat$ was given (see Theorem \ref{BASIC} below). It is related to the length of the cup
product in the basic cohomology
of the foliation and generalizes the
corresponding classical result for the
LS category of a manifold \cite{LENGTH_James}.

We shall prove in Theorem \ref{DERHAM} that another lower bound for the transverse category  is the length of the cup product in the De Rham
cohomology of the ambient manifold for degrees greater than the dimension $d = \dim \FF$ of
the foliation, i.e. $\lcp H^{>d}(M) \leq \cat$.

For compact Hausdorff foliations we
introduce a new lower bound for the {\it saturated} transverse LS category
$\scat$,    in terms of the spectral sequence of the
foliation (Theorem \ref{COMPACT}). 
Explicitly, let $E_r^{p,q}$, $0\leq p\leq \codim \FF$, $0 \leq q \leq \dim
\FF$, be the de Rham spectral sequence of
the foliated manifold $(M,\FF)$,  and let ${E_2^{>0,\bullet}}= \oplus_{p>0, q\geq 0}E_2^{p,q}$ be the subalgebra of
cohomology classes with positive
transverse degree in the term $E_2$. Then the saturated transverse LS category of $(M,\FF)$ is bounded
below by the length of the cup product in ${E_2^{>0,\bullet}}$.

The interest of this result is that the $E_2$ terms of the spectral sequence
of a riemannian foliation on a compact manifold are known to be
finite dimensional \cite{SUSO_Illinois, HECTOR_KACIMI, SARKARIA, SERGIESCU_Spectral}.

During the preparation of this manuscript, S. Hurder informed the author that
he and H. Colman had found independently  analogous
results  for
the {\it tangential} LS category of a foliation (for the $E_1$ term) \cite{COLMAN_HURDER_AMS}, thus improving known
results from H.~Colman and the author
\cite{COLMAN_MACIAS_London} and from W.~Singhof and E.~Vogt \cite{SINGHOF_VOGT}.
As a matter of fact, the corresponding lower bound for the tangential category
of any foliation
follows easily from our constructions (Theorem \ref{TANGENTIAL}).\\

The author acknowledges several fruitful comments from J.~A.~\'Alvarez-L\'opez
and D. Tanr\'e.

\section{Transverse LS category}
Let $(M,\FF)$ be a $\mathcal{C}^\infty$ foliated manifold. An open subset $U\subset
M$ is said to be {\em transversely categorical}
if the inclusion factors through some leaf $L$ up to a foliated homotopy. That
is,
when we consider
the induced foliation $\FF_U$ on $U$, there exists  a leaf $L$ and a $\mathcal{C}^\infty$ homotopy
$H \colon U \times \R \to M$  such that:
each $H_t \colon U \to M$ sends leaves into leaves;
$H_0$ is the inclusion $U\subset M$; and $H_1( U) \subset L$.

\begin{defi}
The {\it transverse LS category} $\cat$ of the foliation is the least
integer $k\geq 0$ such that $M$ can be covered by $k+1$
transversely categorical open sets. 
\end{defi}
If such a covering does not exist we
put $\cat = \infty$.
Notice that since adapted charts are categorical, we have $\cat < \infty$
when the manifold $M$ is compact.

\begin{rem} In the original paper \cite{COLMAN_MACIAS_Topology}, the definition above
corresponds
to $\mycat +1$, but presently we follow the more extended convention that
contractible
spaces have null LS category. Coherently, we take the
length of the cup product (shortly $\lcp$) of an algebra to be the maximum
$k\geq 0$ for which
there exists some product
$a_1  \cdots  a_k \neq 0$ ($k=0$ means that all products are null).
\end{rem}

\section{Basic cohomology}

The following cohomological lower bound for $\cat$ involving the basic
cohomology of
the foliation was given in \cite{COLMAN_MACIAS_Topology}.

Recall that the {\it basic} cohomology $H_b=H(M,\FF)$ is
defined by means of the complex
$\Omega_b=\Omega(M,\FF)$
of {\em basic} forms, that is differential  forms $\omega\in \Omega(M)$
such  that
$i_X\omega =0 =i_X d\omega$ for any vector field $X$ tangent to the
foliation. Then the inclusion $ \Omega_b\subset
\Omega(M)$ induces a morphism $\pi^*\colon H_b \to H(M)$, which in general
is  not injective. Let
$\pi^*H_b^{>0}$ be the image of the basic cohomology in positive degrees.

\begin{thm}[\cite{COLMAN_MACIAS_Topology}]\label{BASIC}
For any foliated manifold, the
transverse LS category is
bounded below by the length of the cup product in $\pi^*H_b^{>0}$.
\end{thm}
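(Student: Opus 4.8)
The plan is to adapt the classical cup-length argument of James \cite{LENGTH_James} to the transverse setting, the essential new ingredient being that a positive-degree basic class restricts trivially to any transversely categorical open set.

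First I would establish the following local fact: if $U\subset M$ is transversely categorical and $\alpha=\pi^*\beta\in\pi^*H_b^{>0}$, then the restriction $\alpha|_U$ vanishes in $H^{>0}(U)$. To see this, represent $\beta$ by a closed basic form $\omega\in\Omega_b$ of positive degree, and let $H\colon U\times\R\to M$ be the foliated homotopy with $H_0$ the inclusion and $H_1(U)\subset L$ for some leaf $L$. Ordinary de Rham homotopy invariance, applied to the smooth map $H$ and the closed form $\omega$, gives $H_0^*[\omega]=H_1^*[\omega]$ in $H^*(U)$; no feature of the foliated structure is used here beyond smoothness of $H$. Writing $H_1=\iota_L\circ g$ with $g\colon U\to L$ and $\iota_L\colon L\hookrightarrow M$ the inclusion of the leaf, we have $H_1^*\omega=g^*(\iota_L^*\omega)$. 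Since every tangent vector to $L$ lies in $T\FF$ and $i_X\omega=0$ for $X$ tangent to $\FF$, the pullback $\iota_L^*\omega$ of the positive-degree basic form to the leaf is zero; hence $H_1^*\omega=0$ and therefore $\alpha|_U=H_0^*[\omega]=0$.

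With this in hand, the remainder follows the classical pattern. Suppose $\cat=k$ and fix a cover $M=U_0\cup\cdots\cup U_k$ by transversely categorical open sets. Given any classes $\alpha_1,\dots,\alpha_{k+1}\in\pi^*H_b^{>0}$, the local fact shows $\alpha_i|_{U_{i-1}}=0$, so each $\alpha_i$ lifts to a relative class $\tilde\alpha_i\in H^*(M,U_{i-1})$. The relative cup product $\tilde\alpha_1\cup\cdots\cup\tilde\alpha_{k+1}$ then lives in $H^*(M,U_0\cup\cdots\cup U_k)=H^*(M,M)=0$, and its image under the restriction map $H^*(M,\bigcup_i U_i)\to H^*(M)$ is exactly $\alpha_1\cup\cdots\cup\alpha_{k+1}$. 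Consequently every product of $k+1$ classes from $\pi^*H_b^{>0}$ vanishes, that is $\lcp \pi^*H_b^{>0}\leq k=\cat$.

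The main obstacle is the local vanishing step, and within it the two points that must be handled carefully: that the foliated homotopy yields genuine de Rham homotopy invariance of the pulled-back class on $U$, and that positive-degree basic forms pull back to zero on a single leaf. Once these are secured, the relative-cohomology bookkeeping is entirely standard and identical to the manifold case.
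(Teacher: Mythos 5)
Your proof is correct, and it takes a recognizably different route from the paper's, even though both follow the classical James cup-length skeleton (local vanishing on each categorical open set, then lifting to relative classes whose product lands in $H^*(M,M)=0$). The difference is where the local vanishing and the lifting take place. You work entirely in ordinary de Rham cohomology: your local step uses only smooth (not foliated) homotopy invariance of $H^*$, together with the pointwise fact that a positive-degree basic form annihilates vectors tangent to $\FF$ and hence pulls back to zero on a leaf; you then lift each class in relative de Rham cohomology $H^*(M,U_{i-1})$. The paper instead establishes the vanishing at the level of basic cohomology: it invokes foliated-homotopy invariance of $H_b$ (a lemma requiring its own verification, namely that the homotopy operator preserves basic forms) and the fact that $H_b^{>0}(L)=0$, lifts each class in \emph{relative basic} cohomology $H_b(M,U_i)$, and only then pushes forward by $\pi^*$ into relative de Rham cohomology to take cup products — necessarily so, since (as the paper remarks) relative basic cohomology carries no cup product. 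Your route is the more elementary one for this particular statement, since it bypasses basic-cohomology homotopy invariance altogether; the paper's route proves something slightly stronger along the way (the restriction $H_b^{>0}(M)\to H_b^{>0}(U)$ is itself zero) and sets up the machinery that is reused for Lemma \ref{HOMOTOPY} and Theorem \ref{COMPACT}. One small point to make explicit in your argument: the factorization $H_1=\iota_L\circ g$ with $g$ smooth uses that leaves are initial (weakly embedded) submanifolds; alternatively, you can avoid the factorization by noting directly that $dH_1$ takes values in $T\FF$, so $H_1^*\omega=0$ follows from $i_X\omega=0$.
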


\begin{proof}
The argument is standard, but we include it for later use.
Let $U\subset
M$ be a transversely categorical open set.
Since two  homotopic maps (by a foliated homotopy) induce the same morphism in
basic cohomology, the induced map
$H^{>0}_b(M)
\to H_b^{>0}(U)$ is null because
$H^{>0}_b(L) =0$ for any leaf $L$. Hence the map
$H_b^{>0}(M,U)
\to H_b^{>0}(M)$ in the long exact sequence (for basic cohomology) of the
pair $(M,U)$ is surjective. Now, let $k= \cat$ and let
$U_0,\dots,U_k$ be
$k+1$ transversely categorical open subsets covering
$M$. If
$\omega_0,\dots,\omega_k$ are basic cohomology classes of positive degrees,
then each $\omega_i$ can be lifted to some
$\xi_i \in H_b(M,U_i)$, and $$\pi^*\xi_0 \sm \cdots \sm \pi^*\xi_k \in
H(M,U_0 \sm \cdots \sm U_k) = H(M,M) = 0,$$
hence $\pi^*(\omega_0 \sm \cdots \sm \omega_k) =0$.
\end{proof}

It should be noted that in general it is not possible to define a cup product in the
relative basic
cohomology, due to the lack
of adequate partitions of unity. So we cannot use the
length of the basic cohomology as a  (better) lower bound  for the
transverse category.
However, this can be done for a
particular class of foliations (compact Hausdorff foliations), and the {\it
saturated}
transverse LS category, as we shall prove in Theorem
\ref{COMPACT}.

\section{New lower bounds}
Another lower bound for $\cat$ is almost immediate from the definitions.
It was suggested by the analogous result
from W. Singhof and E. Vogt for the {\it tangential} category of a foliation
\cite{SINGHOF_VOGT} and was the motivational idea
for the present paper.

Let $H^{>d}(M)$ be the de Rham cohomology of the ambient manifold in
degrees greater than the dimension of the foliation. Clearly,
for a transversely categorical open set $U$ the map $H^{>d}(M) \to
H^{>d}(U)$ induced by the inclusion is the zero map,
because it factors through $H^{>d}(L)=0$, $d=\dim L$. Then the standard
argument (this time for de Rham cohomology of $M$) applies, and we have
proved:

\begin{thm}\label{DERHAM} The transverse LS category of a foliation is
bounded below by the length of the cup product in
$H^{>d}(M)$, for $d = \dim \FF$.
\end{thm}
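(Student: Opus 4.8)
The plan is to run exactly the cup-length argument of Theorem~\ref{BASIC}, but now in the ordinary de Rham cohomology of $M$, where relative cup products are available without the partition-of-unity difficulties that afflict basic cohomology. The starting point is the degree observation already recorded just above the statement: if $U\subset M$ is transversely categorical, then the inclusion $U\hookrightarrow M$ factors up to a smooth homotopy through some leaf $L$, and since every $n$-form on the $d$-dimensional manifold $L$ vanishes for $n>d$, we have $H^{>d}(L)=0$. Because ordinary de Rham cohomology is invariant under arbitrary smooth homotopies (no foliated refinement is needed here), the map $H^{>d}(M)\to H^{>d}(U)$ induced by the inclusion agrees with the one induced by $H_1$, which factors through $H^{>d}(L)=0$ and is therefore the zero map.

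First I would exploit this vanishing through the long exact sequence of the pair $(M,U)$: since $H^{>d}(M)\to H^{>d}(U)$ is zero, exactness forces $H^{>d}(M,U)\to H^{>d}(M)$ to be surjective, so every class $\omega\in H^{>d}(M)$ admits a lift $\xi\in H(M,U)$. Next, let $k=\cat$ and fix a cover $U_0,\dots,U_k$ of $M$ by transversely categorical open sets. Given classes $\omega_0,\dots,\omega_k\in H^{>d}(M)$, lift each $\omega_i$ to $\xi_i\in H(M,U_i)$. The relative cup product then gives
$$\xi_0\sm\cdots\sm\xi_k\in H(M,U_0\sm\cdots\sm U_k)=H(M,M)=0,$$
and its image under the natural map $H(M,U_0\sm\cdots\sm U_k)\to H(M)$, which is precisely the absolute product $\omega_0\sm\cdots\sm\omega_k$, therefore vanishes. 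Hence every $(k+1)$-fold product in $H^{>d}(M)$ is null, so $\lcp H^{>d}(M)\leq k=\cat$, which is the assertion.

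The only point requiring any care is the compatibility of the relative cup product with the pairing $H(M,U_0)\times\cdots\times H(M,U_k)\to H(M,U_0\sm\cdots\sm U_k)$ and with the forgetful map to absolute cohomology; but for de Rham cohomology this is entirely standard, and indeed it is exactly the feature that makes this bound cleaner to establish than the basic-cohomology bound of Theorem~\ref{BASIC}. So I do not expect a genuine obstacle: the whole substance of the theorem is the geometric vanishing $H^{>d}(L)=0$ forced by $d=\dim\FF$, after which the classical Lusternik--Schnirelmann cup-length mechanism transplants verbatim.
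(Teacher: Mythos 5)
Your proposal is correct and follows essentially the same route as the paper: the paper likewise derives that $H^{>d}(M)\to H^{>d}(U)$ is zero for a transversely categorical $U$ from the factorization through $H^{>d}(L)=0$ (using ordinary homotopy invariance, with no foliated refinement), and then invokes the ``standard argument'' of Theorem~\ref{BASIC}, this time in de Rham cohomology, which is exactly the lifting-to-relative-classes and relative cup product mechanism you have written out in full. There is no gap; you have merely made explicit the steps the paper leaves as a citation of its earlier proof.
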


When
comparing the latter  result with  Theorem \ref{BASIC}, one realizes that both
 bounds
can be explained in terms of the (de Rham) spectral sequence $E_r^{p,q}
\Rightarrow H(M)$ of the
foliation.

\section{The spectral sequence of a foliated manifold}\label{SPECTRAL}
This is a very well known algebraic tool
which  has been extensively studied; we refer the reader for instance to
 \cite{MACIAS, MASA, SARKARIA, VAISMAN}  among many others. When the foliation is defined by a
fibre bundle, one obtains the Serre spectral sequence, written for de Rham cohomology as in  \cite{HATTORI}. 

\subsection{Basic notions}
Let us recall some basic notions. Let $\Omega(M)$ be the de Rham complex of
the ambient manifold
$M$. We define a decreasing filtration $F^p\Omega^r(M)$, $0 \leq p \leq r$,  of
$\Omega^r(M)$,
$0 \leq r \leq \dim M$,   by the condition:
$\omega \in F^p\Omega^r$ if and only if $i_{X_0}\cdots i_{X_{r-p}}\omega = 0$
whenever the vector fields $X_0,\dots, X_{r-p}$ are  tangent to the foliation.

Put $E_0^{p,q} =
F^{p}\Omega^{p+q}/F^{p+1}\Omega^{p+q}$. Then the exterior differential $d$
induces, for each $0 \leq p \leq \codim \FF$, a differential $d_0^{p,q}
\colon E_0^{p,q} \to E_0^{p,q+1}$, $0 \leq q \leq \dim \FF$, whose
cohomology groups are denoted by $E_1^{p,q}$.

Often we denote $\Omega^{p,q} =E_0^{p,q}$. By taking any
distribution ${\mathcal N}$ complementary to the foliation
we have $TM = T\FF \oplus {\mathcal N}$, so $\Omega(M) = \Gamma \Lambda (T^*M)$
  is isomorphic to
$\Gamma \Lambda ({\mathcal N^* })\otimes \Gamma \Lambda (T^*\FF)$, hence
$\Omega^{p,q} \cong
\Gamma \Lambda^p({\mathcal N^*}) \otimes \Gamma \Lambda^q (T^*\FF)$.
We shall use the well known fact that if $\omega \in \Omega^{p,q}$ then $d
\omega \in
\Omega^{p+q+1}(M) $ decomposes (because $d^2=0$) in three parts
$[d\omega]^{p+1,q} + [d\omega]^{p,q+1} +[d\omega]^{p+2,q-1}$.

Now, the exterior differential induces morphisms
$d_1 \colon E_1^{p,q} \to E_1^{p+1,q}$ such that $(d_1)^2 = 0$, and we denote
by $E_2^{p,q} = H^p(E_1^{\bullet,q},d_1)$ its cohomology groups. The group
$H_b^{p}$ of basic
cohomology  corresponds to the term $E_2^{p,0}$.

 These are the first steps
in order to define  morphisms $d_r^{p,q} \colon E_r^{p,q}
\to E_r^{p+r, q-r+1}$, $r\geq 0$, and a spectral sequence $E_{r+1} = H(E_r,d_r)$
which converges
to the de Rham cohomology $H(M)$ of the ambient manifold in a finite number
of steps. 

We shall also need the well known fact that the exterior product of differential
forms
induces a well defined cup product in each term $E_r$ of the spectral sequence,
which is
compatible with the bigraduation. Moreover, the morphisms $d_r$ satisfy the
usual
derivation formula
$d_r(\alpha \sm \beta) = d_r\alpha \sm \beta + (-1)^\alpha \alpha \sm d_r\beta$.
More details about this cup product can be found for instance in the books of A.~T. ~Fomenko {\it et
al.} \cite{FOMENKO} or  P.~Selick \cite{SELICK}.

\subsection{Foliated homotopy}

The following result appeared in J.~A. \'Alvarez L\'opez's thesis \cite{SUSO}.

\begin{lem} \label{HOMOTOPY}
Let $f \colon (M,\FF) \to (M^\prime,\FF^\prime)$ be a
foliated (i.e. sending leaves into leaves)
$\mathcal{C}^\infty$ map between foliated manifolds. Then:
\begin{enumerate}
\item
The induced cohomology morphism $f^* \colon H(M^\prime) \to H(M)$
preserves the filtration and hence it maps $E_r^{p,q}(M^\prime)$ into
$E_r^{p,q}(M)$, $0\leq r $, for all $p,q$;
\item
If $g$ is another map which is homotopic to $f$   by a
foliated
homotopy   then $f^* = g^*$ on each $E_2^{p,q}$.
\end{enumerate}
\end{lem}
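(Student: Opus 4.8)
The plan is to prove (1) first as a pointwise statement about interior products, and then to bootstrap (2) from it by building an explicit \emph{filtered} chain homotopy out of the given homotopy $H$; the whole subtlety of the statement — equality at $E_2$ rather than already at $E_1$ — is encoded in exactly how much that homotopy shifts the filtration.

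For (1), I would show that $f^*$ preserves the filtration $F^p\Omega^r$ directly from its definition. Fix $\omega\in F^p\Omega^r(M')$ and vector fields $X_0,\dots,X_{r-p}$ on $M$ tangent to $\FF$. For $x\in M$ and $v_1,\dots,v_{p-1}\in T_xM$,
\[
\bigl(i_{X_0}\cdots i_{X_{r-p}} f^*\omega\bigr)_x(v_1,\dots,v_{p-1})=\omega_{f(x)}\bigl(df_x X_0,\dots,df_x X_{r-p},df_x v_1,\dots,df_x v_{p-1}\bigr).
\]
Since $f$ is foliated, each $df_x X_j$ is tangent to $\FF'$, and as $\omega\in F^p\Omega^r(M')$ this contraction against $r-p+1$ leafwise vectors vanishes. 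Hence $f^*\omega\in F^p\Omega^r(M)$. Because $f^*$ also commutes with $d$, it is a morphism of filtered differential complexes, so it induces maps $E_r^{p,q}(M')\to E_r^{p,q}(M)$ compatible with every $d_r$, by the standard functoriality of the spectral sequence of a filtered complex.

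For (2), write $H\colon M\times\R\to M'$ for the foliated homotopy with $H_0=f$, $H_1=g$ and $H_t$ foliated for every $t$, and let $\bar K\colon\Omega^\bullet(M\times\R)\to\Omega^{\bullet-1}(M)$ be the usual fibre-integration operator satisfying $i_1^*-i_0^*=d\bar K+\bar K d$. Setting $K=\bar K\circ H^*$ and using that $H^*$ commutes with $d$, one gets $g^*-f^*=(i_1^*-i_0^*)H^*=dK+Kd$. The technical heart is the claim $K\bigl(F^p\Omega^r(M')\bigr)\subseteq F^{p-1}\Omega^{r-1}(M)$. To see it I would contract $K\omega$ against $r-p+1$ leafwise fields $Y_0,\dots,Y_{r-p}$ on $M$ and push the computation through $\bar K$; a pointwise evaluation turns this into $\omega$ applied to the velocity $dH(\partial_t)$ together with the vectors $dH(Y_j)$. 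As each $H_t$ is foliated, the $dH(Y_j)$ are tangent to $\FF'$, so again $\omega\in F^p\Omega^r(M')$ kills the expression. The single transverse direction that $\partial_t$ is permitted to contribute is precisely why $K$ \emph{drops} the filtration by one instead of preserving it — had $H$ been foliated as a map of product foliations, $K$ would preserve the filtration and one would get equality already on $E_1$.

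Finally I would feed $K$ into the filtered complex. Represent a class of $E_2^{p,q}(M')$ by $\omega\in F^p\Omega^{p+q}(M')$ with $d\omega\in F^{p+2}$ (the $E_2$-cocycle condition). By (1) the difference $g^*\omega-f^*\omega$ lies in $F^p$ with $d(g^*\omega-f^*\omega)\in F^{p+2}$, so it represents a genuine class in $E_2^{p,q}(M)$, and
\[
g^*\omega-f^*\omega=dK\omega+Kd\omega .
\]
Here $K\omega\in F^{p-1}$ forces $dK\omega\in F^p\cap d(F^{p-1})$, a $d_1$-boundary, while $d\omega\in F^{p+2}$ forces $Kd\omega\in F^{p+1}$, a term of strictly higher filtration; both vanish in the subquotient defining $E_2^{p,q}$, giving $g^*=f^*$ on $E_2$. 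The main obstacle I anticipate is the bookkeeping: verifying cleanly that $K$ lowers the filtration by \emph{exactly} one, and checking that the two error terms land in the precise subgroups ($B_1^{p,q}$ and $Z_1^{p+1,q-1}$) quotiented out in $E_2^{p,q}$. The rest is the standard homotopy-operator formalism.
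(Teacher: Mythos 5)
Your proposal is correct and follows essentially the same route as the paper: part (1) by the pointwise tangency argument, and part (2) via fiber integration along the homotopy parameter, whose key property --- lowering the transverse filtration degree by exactly one, since $\partial_t$ is the only non-leafwise direction --- is exactly what the paper exploits to get equality at $E_2$. The only organizational difference is that the paper reduces to the cylinder $M\times\R$ foliated by $L\times\{t\}$ and states the conclusion as an algebraic homotopy $E_1^{p+1,q}(M')\to E_1^{p,q}(M)$ at the $E_1$ level, whereas you compose with $H^*$ to get $K=\bar K\circ H^*$ and verify membership in the subquotients $B_1^{p,q}$ and $Z_1^{p+1,q-1}$ directly; these are equivalent formulations of the same standard fact about filtration-shifting chain homotopies.
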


\begin{proof} Part (1) is clear because  $X \in \Gamma T\FF$ implies $f_*(X) \in \Gamma T\FF^\prime$.

For part (2) we can suppose $M^\prime = M\times \R$, endowed with the foliation
$L\times \{t\}$, $L\in \FF$, and $f=i_0$, $g=i_1$, where $i_t(x)=(x,t)$. The maps $i_0,i_1$ are homotopic  
by the foliated homotopy $i_t$. Hence the morphisms $i_0^*, i_1^* \colon
\Omega^r(M^\prime) \to
\Omega^r(M)$ are algebraically homotopic by the application
$H \colon \Omega^{r+1}(M^\prime) \to \Omega^{r}(M)$ given by
$H\omega = \int_0^1 i_{\partial_t}\omega dt$. It  induces an algebraic homotopy
$E_1^{p+1,q}(M^\prime) \to E_1^{p,q}(M)$, $p+q=r$, between
$i_0^*, i_1^*$ at the level $E_1$, hence $i_0^*=i_1^*$ at the level $E_2$.
\end{proof}

\section{Compact Hausdorff foliations}
A foliation $\FF$ on the compact manifold $M$ is said to be {\it compact Hausdorff}
if every leaf is compact and the space of leaves is Hausdorff  \cite{EPSTEIN}. Several
interesting results and computations
of the saturated transverse LS category $\scat$ in this setting have been obtained
by H.~Colman, 
S.~Hurder and  P.~G. Walczak \cite{COLMAN_HURDER_Transactions, HURDER_WALCZAK}. Recall that $\scat$ is defined \cite{COLMAN_MACIAS_Topology}   by considering
transversely categorical  open sets which are {\it saturated} (i.e.  a
union of leaves).

We have the following result:

\begin{thm}\label{COMPACT} Let $\FF$ be a compact Hausdorff foliation on the
manifold $M$. Let $E_2^{>0,\bullet} = \oplus_{p>0, q\geq 0}E_2^{p,q}$ be the subalgebra of classes in the $E_2$ term of the spectral sequence with positive transverse degree. Then
$\lcp E_2^{>0,\bullet}\leq \scat$.
\end{thm}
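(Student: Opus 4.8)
The plan is to reproduce the standard argument used in the proof of Theorem \ref{BASIC}, but now carried out at the level of the whole $E_2$ term of the spectral sequence rather than of its basic part $E_2^{\bullet,0}=H_b$. The two ingredients will be a relative spectral sequence for a saturated pair $(M,U)$, together with the foliated homotopy invariance of Lemma \ref{HOMOTOPY}. Throughout I restrict to saturated open sets, so that the induced foliation $\FF_U$ and its filtered de Rham complex are defined and the restriction maps are foliated in the sense of that lemma.

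First I would set up, for each saturated open $U\subset M$, the relative filtered de Rham complex of the pair $(M,U)$ and its associated relative spectral sequence $E_r^{p,q}(M,U)$. Since the filtration $F^p\Omega^r$ is defined on the \emph{full} complex $\Omega(M)$, on which ordinary smooth partitions of unity are available, the short exact sequence of filtered complexes $0\to\Omega(M,U)\to\Omega(M)\to\Omega(U)\to 0$ is filtration compatible, and standard homological algebra of filtered complexes yields a long exact sequence on each page; in particular
\[
\cdots \to E_2^{p,q}(M,U)\to E_2^{p,q}(M)\to E_2^{p,q}(U)\to\cdots .
\]
This is precisely the point at which the compact Hausdorff hypothesis intervenes: it supplies saturated (categorical) neighbourhoods with good closures, so that the relative complex and the relative product below make sense. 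This is the essential difference with the relative \emph{basic} cohomology, where no adequate basic partition of unity exists and the construction fails.

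Next I would prove that for a saturated transversely categorical $U$ the restriction $E_2^{>0,\bullet}(M)\to E_2^{>0,\bullet}(U)$ is the zero map. By definition the inclusion $U\subset M$ is foliated homotopic to a map $H_1$ whose image lies in a single leaf $L$; by Lemma \ref{HOMOTOPY}(2) both induce the same morphism on $E_2$, and $H_1$ factors as $U\to L\hookrightarrow M$, so this morphism factors through $E_2(L)$. As $L$ carries the trivial $0$-codimensional foliation, $E_2^{p,q}(L)=0$ for $p>0$, whence the restriction vanishes on $E_2^{>0,\bullet}$. By the exact sequence above, every class of positive transverse degree then lifts to the relative term $E_2(M,U)$.

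Finally I would invoke a relative cup product $E_2(M,U_0)\otimes\cdots\otimes E_2(M,U_k)\to E_2(M,U_0\sm\cdots\sm U_k)$, induced by exterior multiplication of forms vanishing on the respective open sets and compatible with the filtration. Taking $k=\scat$ and a saturated categorical cover $U_0,\dots,U_k$, arbitrary classes $\alpha_0,\dots,\alpha_k\in E_2^{>0,\bullet}$ lift to $\bar\alpha_i\in E_2(M,U_i)$, and the product $\bar\alpha_0\sm\cdots\sm\bar\alpha_k$ lies in $E_2(M,U_0\sm\cdots\sm U_k)=E_2(M,M)=0$ while mapping to $\alpha_0\sm\cdots\sm\alpha_k$; hence every $(k+1)$-fold product vanishes and $\lcp E_2^{>0,\bullet}\leq\scat$. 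I expect the main obstacle to be exactly the well-definedness of this relative cup product at the $E_2$ level and its compatibility with the connecting morphisms of the exact sequence: the delicate verification is that the filtration descends correctly to the relative complexes and that the product of relative lifts restricts to the absolute product, which is where the geometry of compact Hausdorff foliations (nice saturated neighbourhoods and a well-behaved leaf space) is genuinely needed.
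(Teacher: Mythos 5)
Your overall strategy is the same as the paper's: lift classes of positive transverse degree to relative $E_2$ terms of the saturated pairs $(M,U_i)$, use Lemma \ref{HOMOTOPY}(2) together with $E_2^{p,q}(L)=0$ for $p>0$ to get the lifting, and kill the product in $E_2(M,M)=0$. Steps two and three of your outline (vanishing of the restriction on $E_2^{>0,\bullet}$, and the final counting argument) are exactly the paper's. But there is a genuine gap at the crux: the relative cup product on the $E_2$ terms is \emph{invoked}, not constructed, and the construction you sketch would not work. The relative terms cannot be realized as ``forms vanishing on the respective open sets'': restriction $\Omega(M)\to\Omega(U)$ is not even surjective in de Rham theory, so there is no such short exact sequence, and the relative objects must be taken as mapping cones (the paper sets $E_1^{p,q}(M,W)=E_1^{p,q}(M)\oplus E_1^{p-1,q}(W)$ with the cone differential, in Section \ref{RELATIVE}). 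Exterior multiplication does not directly define a product on cones; even in the classical de Rham setting the relative product requires an excision/partition-of-unity argument (Bott--Tu). In the foliated setting this is the entire technical content of the paper: basic partitions of unity give a section $S(\omega)=(\varphi_V\omega,-\varphi_U\omega)$ that is a morphism of $(\Omega^{p,\bullet},d_0)$-complexes, hence the Mayer--Vietoris sequence (\ref{MV_E1}) is \emph{short exact at the $E_1$ level} (Lemma \ref{MAYER_VIETORIS}); from it one extracts the connecting morphism $\Delta$ and the left section $J$, writes the explicit relative product (\ref{CUP}), and then must verify (Proposition \ref{CONMUTES}) that it commutes with the differentials and so descends to $E_2$. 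None of this is routine enough to be ``invoked,'' and you correctly flag it as the main obstacle---but flagging it is not supplying it.

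Two further inaccuracies compound the gap. First, you misplace the role of the compact Hausdorff hypothesis: it is not needed for the long exact sequence of the pair (which is free once the relative terms are defined as the cone of $(i_U)_1^*\colon E_1(M)\to E_1(U)$), nor does it enter through ``good closures'' or a ``well-behaved leaf space.'' It enters in exactly one place: it guarantees partitions of unity that are \emph{constant along the leaves} subordinate to finite saturated covers, so that $d\varphi_U, d\varphi_V\in\Omega^{1,0}$, which is what makes $S$ a chain map and the $E_1$ Mayer--Vietoris sequence exact; without this, the relative product cannot be defined (this is precisely why the paper cannot use relative \emph{basic} cohomology for general foliations). Second, your claim that a short exact sequence of filtered complexes yields, by ``standard homological algebra,'' a long exact sequence \emph{on each page} is false in general: one gets a long exact sequence of $E_1$ terms, but nothing automatic at $E_2$, since the $E_1$ long exact sequence is not a short exact sequence of $(E_1,d_1)$-complexes. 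The paper sidesteps this precisely by defining the relative $E_2$ as the cohomology of the $E_1$-level cone, for which the long exact sequence is automatic by construction.
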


\begin{proof}  
We present here the guidelines of the proof. The details
are rather technical (although
not sophisticated), so we have moved them to Section
\ref{RELATIVE}.

First, since there are partitions of unity which are constant along the leaves,  we
have a Mayer-Vietoris sequence
for saturated open sets, which is excisive in the $E_1$ level.  Hence it is possible
to define a cup
product in the relative $E_2$ terms of the spectral sequence. 
Finally, part (2) of Lemma \ref{HOMOTOPY}
allows us to apply the standard cohomology argument cited in Theorem \ref{BASIC}
because
$E_2^{p,q}(L)=0$ for $p>0$ and any leaf $L$. \end{proof}

\begin{rem}[Fiber bundles] Our computation    has an application to the
classical LS category of a manifold. 
Let  $F\to E \to B$ be a smooth locally trivial fiber bundle
with connected fibers,
$E_r^{p,q}$ the corresponding spectral sequence \cite{HATTORI}, hence $E_2^{p,q}=H^p(B;\mathcal{H}^q(F))$.
The (classical)
Lusternik-Schnirelmann
category $\mycat\,B$ of the base equals $\scat$ for the foliation $\FF$ in $E$ defined by the fibers
\cite{COLMAN_MACIAS_Topology}, hence it is bounded below by the length of the cup
product in
$E_2^{p>0,\bullet} = \oplus_{p>0, q\geq 0}E_2^{p,q}$.
(However, since we are working with real coefficients, this bound is possibly
not better than the usual lower bound  $\lcp  H^{>0}(B) \leq \mycat\, B$). 
 \end{rem}

\section{Relative cup product} \label{RELATIVE}
In this section we develop the technical details in the proof of
Theorem \ref{COMPACT}. Probably many of them are folk, as we follow the ideas of the book of Bott-Tu \cite{BOTT_TU}. Since the proof is rather lengthly,
I have tried to write it in such a way that it becomes accessible to non specialists.

The crucial
point is the Mayer-Vietoris argument for the $E_1$ term in Proposition \ref{MAYER_VIETORIS}.
In fact, for $E_0$ it is known (see   Munkres's book \cite{MUNKRES})
that  a cup product
in the relative cohomology groups of a topological space can be defined for any excisive pair.

\subsection{Preliminaries}
It is an exercise to prove that for a compact Hausdorff foliation there
exist partitions of
unity which are constant along the leaves, for any 
finite covering by saturated open sets.

Let $W$ be a saturated open set.
Since the inclusion $W\subset M$ is a foliated map, we have induced
morphisms $(i_W)_r^*\colon (E_r(M),d_r) \to (E_r(W),d_r)$ between the
spectral sequences. Often we denote
$(i_W)_r^*\omega$ simply by $i^*\omega$ or $\omega_W$ when  there is no
risk of confusion.

\subsection{Relative cohomology}\label{REL_COHOM}
 Let  us define
$$E_1^{p,q}(M,W) = E_1^{p,q}(M)\oplus E_1^{p-1,q}(W)$$
endowed with the differential $\delta =\delta_1 \colon E_1^{p,q}(M,W) \to
E_1^{p+1,q}(M,W)$ given by
$$\delta (\mu, \omega) = (d_1\omega, i_1^*\mu - d_1\omega).$$

Since $\delta^2 =0$, we can define $E_2^{p,q}(M,W) \colon =
H^p(E_1^{\bullet,q}(M,W),\delta)$, and we have a long
exact sequence in cohomology,
$$\cdots \to E_2^{p-1,q}(W) \to E_2^{p,q}(M,W) \to E_2^{p,q}(M)
\stackrel{}{\to} E_2^{p,q}(W) \to \cdots,$$
induced by the (obvious) short exact sequence of complexes
$$ 0 \to (E_1^{\bullet-1,q}(W), -d_1) \to  (E_1^{\bullet,q}(M,W),\delta) \to
(E_1^{\bullet,q}(M), d_1) \to 0.$$

It is clear that this construction could already be done at the $E_0$
level.

\subsection{Connecting morphism}
Let us denote $\Omega^{p,q} = E_0^{p,q}$.
Let $U,V\subset M$ be saturated open sets, and
$\{\varphi_U,\varphi_V\}$ a smooth partition of unity
on $U\cup V$ subordinated to the open covering
$\{U,V\}$. If the functions $\varphi_U,\varphi_V$ are constant along the leaves,
then
$d\varphi_U, d\varphi_V \in \Omega^{1,0}$.  The {\it connecting morphism}
$$\Delta \colon \Omega^{p,q}(U\cap V) \to \Omega^{p+1,q}(U\cup V)$$
 is defined by
$$\Delta(\omega) =
\left\{ \begin{matrix}
d\varphi_V \wedge \omega & {\mathrm on\ } U \cr
-d\varphi_U \wedge \omega & {\mathrm on\ } V \cr
\end{matrix}\right. .$$
This is well defined (we invite the reader to check it). Moreover it is a
morphism of complexes
$\Delta \colon (\Omega^{p,\bullet}, d_0) \to (\Omega^{p+1,\bullet}, -d_0) $
because for
$\omega \in \Omega^{p,q}$  we have
(for instance on $U$)
\begin{align*}
d_0\Delta(\omega) &=  \\
 {[d\Delta(\omega)]}^{p+1,q+1} = [d(d\varphi_V \wedge\omega)]^{p+1,q+1}
&=  \\
{[- d\varphi_V \wedge d \omega]}^{p+1,q+1} = -d\varphi_V \wedge [d\omega
]^{p,q+1} &=   \\
-d\varphi_V \wedge d_0\omega &=  - \Delta d_0\omega,
\end{align*}
and analogously on $V$. Then, since  $E_1^{p,q} = H^q(\Omega^{p,\bullet},d_0)$,
we have  induced
morphisms
\begin{equation}\label{CONNECT}
\Delta \colon E_1^{p,q}(U\cap V) \to E_1^{p+1,q}(U\cup V).
\end{equation}

\subsection{Mayer-Vietoris sequence}

Now we consider
the Mayer-Vietoris sequence
\begin{equation}\label{MV_E1}
0 \to E_1^{p,q}(U\cup V) \stackrel{i}{\to} E_1^{p,q}(U)\oplus E_1^{p,q}(V)
\stackrel{\pi}{\to}
E_1^{p,q}(U\cap V)
\to 0
\end{equation}
 defined in the usual way, that is 
$$i(\xi) =\left(
(i_U)_1^*(\xi),(i_V)_1^*)(\xi) \right)$$
 and
$$\pi(\alpha,\beta) = (i_{U\cap V})_1^*(\alpha) - (i_{U\cap V})_1^*(\beta).$$

Due to the existence of basic partitions of unity
we have:

\begin{lem}\label{MAYER_VIETORIS}
The Mayer-Vietoris sequence (\ref{MV_E1}) is exact.
\end{lem}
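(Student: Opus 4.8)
The plan is to deduce the exactness of (\ref{MV_E1}) from the long exact sequence attached to a short exact sequence of $d_0$-complexes at the $E_0$ level, and then to show that the connecting homomorphism of that long sequence vanishes identically. This vanishing is exactly where the hypothesis of basic partitions of unity enters decisively.

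First I would write down, for each fixed $p$, the sequence of $d_0$-complexes
\begin{equation*}
0 \to \Omega^{p,\bullet}(U\cup V) \stackrel{i}{\to} \Omega^{p,\bullet}(U)\oplus \Omega^{p,\bullet}(V) \stackrel{\pi}{\to} \Omega^{p,\bullet}(U\cap V) \to 0,
\end{equation*}
with $i$ and $\pi$ the usual restriction maps. Injectivity of $i$ is immediate, and exactness in the middle is the standard observation that a pair agreeing on the overlap comes from a global section. Surjectivity of $\pi$ is the only point needing the partition of unity: given $\omega \in \Omega^{p,q}(U\cap V)$, the pair $(\varphi_V\omega, -\varphi_U\omega)$, each term extended by zero outside the overlap, is a $\pi$-preimage. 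Since multiplication by the functions $\varphi_U,\varphi_V$ preserves the filtration, it descends to the quotients $\Omega^{p,q}=E_0^{p,q}$, so this is a genuine short exact sequence of $d_0$-complexes. Passing to $d_0$-cohomology yields the long exact sequence
\begin{equation*}
\cdots \to E_1^{p,q}(U\cup V) \stackrel{i}{\to} E_1^{p,q}(U)\oplus E_1^{p,q}(V) \stackrel{\pi}{\to} E_1^{p,q}(U\cap V) \stackrel{\partial}{\to} E_1^{p,q+1}(U\cup V) \to \cdots
\end{equation*}
whose connecting morphism $\partial$ raises $q$ by one.

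The crucial step is to prove that $\partial = 0$. Given a $d_0$-closed class $[\omega]\in E_1^{p,q}(U\cap V)$, its image $\partial[\omega]$ is computed by lifting $\omega$ to $(\varphi_V\omega, -\varphi_U\omega)$ and applying $d_0$. Here I would use that, the partition being constant along the leaves, one has $d\varphi_U, d\varphi_V \in \Omega^{1,0}$, so that $d\varphi_V \wedge \omega \in \Omega^{p+1,q}$ contributes nothing to the $(p,q+1)$ component. Therefore $d_0(\varphi_V\omega) = [d(\varphi_V\omega)]^{p,q+1} = \varphi_V [d\omega]^{p,q+1} = \varphi_V d_0\omega = 0$, and likewise $d_0(\varphi_U\omega)=0$. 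Thus the lifted cochain is already $d_0$-closed, its preimage under $i$ is $0$, and $\partial[\omega]=0$.

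With $\partial\equiv 0$, the long exact sequence breaks into the short exact pieces (\ref{MV_E1}): the vanishing of the $\partial$ entering $E_1^{p,q}(U\cup V)$ gives injectivity of $i$, the vanishing of the $\partial$ leaving $E_1^{p,q}(U\cap V)$ gives surjectivity of $\pi$, and exactness in the middle is inherited from the long sequence. I expect the main obstacle to be bookkeeping rather than conceptual: one must check carefully that restriction, multiplication by $\varphi_U,\varphi_V$, and $d_0$ are all well defined on the quotient spaces $E_0^{p,q}$ and commute as required, and that extraction of the $(p,q+1)$-component is compatible with passing to $d_0$-cohomology. The observation that the \emph{surviving} part of the Mayer--Vietoris connecting map is precisely the $(p+1,q)$-component $d\varphi_V\wedge\omega = \Delta(\omega)$ of (\ref{CONNECT}) is what links this lemma to the relative cup product constructed afterwards.
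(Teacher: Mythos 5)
Your proof is correct and follows essentially the same route as the paper: the same $E_0$-level Mayer--Vietoris sequence of $d_0$-complexes, with the basic partition of unity entering through the identity $d_0(\varphi_V\omega)=\varphi_V\,d_0\omega$ (valid since $d\varphi_V\in\Omega^{1,0}$). The paper packages this computation as the statement that the section $S(\omega)=(\varphi_V\omega,-\varphi_U\omega)$ is a morphism of complexes, which splits the long exact sequence, whereas you phrase it as the vanishing of the connecting homomorphism $\partial$; these are two formulations of the same argument.
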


\begin{proof}
Let
\begin{equation}\label{MV_E0}
0 \to (\Omega^{p,\bullet}(U \cup V),d_0) \stackrel{i}{\to}
(\Omega^{p,\bullet}(U),d_0)\oplus (\Omega^{p,\bullet}(V),d_0)
\stackrel{\pi}{\to}
(\Omega^{p,\bullet}(U\cap V), d_0) \to 0
\end{equation}
be the usual Mayer-Vietoris short exact sequence for  the ambient manifold,
restricted to a fixed transverse degree $p$. The positive fact
is that the usual section of $\pi$ given by $S (\omega) = (\varphi_V
\omega, -\varphi_U \omega)$ is
in our setting a {\it morphism of complexes}, because our partitions of
unity are constant along the leaves,
i.e. $d\varphi_U, d\varphi_V \in \Omega^{1,0}$. In fact, for $\omega \in
\Omega^{p,q}(U\cap V)$ we have
\begin{align*}
dS(\omega) &= \\
 \left(d_0(\varphi_V \omega),-d_0(\varphi_U \omega)\right) 
&= \\
([d(\varphi_V \omega)]^{p,q+1},-[d(\varphi_U \omega)]^{p,q+1}) &=
&\\
([d\varphi_V \wedge \omega + \varphi_V \wedge
d\omega]^{p,q+1},-[d\varphi_U \wedge \omega
+ \varphi_U \wedge d\omega)]^{p,q+1}) &=  \\
(\varphi_V \wedge [d\omega]^{p,q+1}, -\varphi_U \wedge
[d\omega)]^{p,q+1}) &=  \\
S ([d\omega]^{p,q+1})  &=  S d_0 (\omega). 
\end{align*}
Then the long exact sequence associated to (\ref{MV_E0}) splits, and gives
the short exact sequence
(\ref{MV_E1}), because $E_1^{p,q} = H^q(\Omega^{p,\bullet},d_0)$. \end{proof}

Notice that the morphism $\Delta$ explicitly defined in (\ref{CONNECT})
induces in fact the connecting morphism of
the long exact sequence associated to (\ref{MV_E1}). Moreover the section $S$ at
the $E_0$ level
induces a section (also denoted $S$) such that $\pi S=\id$ in the $E_1$ sequence
(\ref{MV_E1}).
However,
$S$ is not yet a morphism of complexes, and the default
\begin{equation}\label{DEFAULT1}
\Delta = dS -Sd
\end{equation}
 is just the
connecting morphism defined above, as the reader can check.

On the other hand, the section $S$ on the right side of the sequence
(\ref{MV_E1})
induces another section $J = \id - S\pi$ for the {\it left side}, that is
$$J \colon E_1^{p,q}(U)\oplus E_1^{p,q}(V) \to E_1^{p,q}(U\cup V)$$ such that
$J i = \id$.
It is an exercise to prove that
\begin{equation}\label{DEFAULT2}
dJ -Jd = -\Delta \pi.
\end{equation}

\subsection{Cup product in relative cohomology}

Now we define a product (compatible with the absolute cup product)
\begin{equation}\label{CUP_E2}
\sm \colon E_2^{p,q}(M,U) \otimes E_2^{r,s}(M,V) \to E_2^{p+r,q+s}(M, U
\cup V)
\end{equation}
in the $E_2$ term of the spectral sequence. It
will be induced by a product in the relative $E_1$ level (defined in section
\ref{REL_COHOM}), that is
$$\sm \colon E_1^{p,q}(M,U) \otimes E_1^{r,s}(M,V) \to E_1^{p+r,q+s}(M, U
\cup V).$$ This latter  product is given by
\begin{equation}\label{CUP_E1}
(\mu,\alpha) \sm (\nu, \beta) = (\mu \sm \nu, \xi),
\end{equation}
where $\xi \in E_1^{p+r-1,q+s}(U \cup V)$
is explicitly written as
\begin{equation}\label{CUP}
\xi = J\left(\alpha \sm \nu_U, (-1)^\mu \mu_V \sm \beta\right) + (-1)^r \Delta
\left(\alpha_{U\cap V} \sm
\beta_{U\cap V} \right).
\end{equation}
Here, $$\Delta \colon E_1^{p-1+r-1,q+s}(U\cap V) \to
E_1^{p+r-1,q+s}(U\cup V)$$ is the connecting morphism defined in
(\ref{CONNECT}),
and $$J \colon E_1^{p+r-1,q+s}(U)\oplus E_1^{p+r-1,q+s}(V) \to
E_1^{p+r-1,q+s}(U\cup V)$$ is the
left section $J$ considered in equation \ref{DEFAULT2}.
When there is no risk of confusion we shall understand that the classes $\mu, \beta,
\nu, \beta$ are adequately restricted, so we should simply write
$$\xi = J\left( \alpha \sm \nu, (-1)^\mu \mu \sm \beta\right) + (-1)^r \Delta \left(\alpha \sm
\beta\right).$$

Next we prove:
\begin{prop} \label{CONMUTES}The product (\ref{CUP}) induces a well defined
product in the $E_2$
terms.
\end{prop}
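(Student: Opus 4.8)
The plan is to reduce the statement to a single Leibniz identity for the relative differential $\delta$ and then to verify that identity by a direct (if lengthy) computation resting on the structural relations already established: the absolute derivation formula $d_1(a\sm b)=d_1a\sm b+(-1)^{a}a\sm d_1b$, the defect formula (\ref{DEFAULT1}) $\Delta=d_1S-Sd_1$, and the defect formula (\ref{DEFAULT2}) $d_1J-Jd_1=-\Delta\pi$, together with the relations $\pi S=\id$ and $Ji=\id$ that come from the exactness of the Mayer--Vietoris sequence (\ref{MV_E1}) (Lemma \ref{MAYER_VIETORIS}).

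First I would record the reduction. Since $E_2^{p,q}(M,W)=H^p(E_1^{\bullet,q}(M,W),\delta)$, the cochain-level product (\ref{CUP_E1})--(\ref{CUP}) descends to the relative $E_2$ terms as soon as it is a morphism of complexes for $\delta$, i.e. for $x=(\mu,\alpha)\in E_1^{p,q}(M,U)$ and $y=(\nu,\beta)\in E_1^{r,s}(M,V)$,
\[
\delta(x\sm y)=\delta x\sm y+(-1)^{p+q}\,x\sm\delta y .
\]
Indeed, this shows that the product of two $\delta$-cocycles is a $\delta$-cocycle and that multiplying a cocycle by a $\delta$-coboundary yields a $\delta$-coboundary, so the induced pairing on classes is well defined. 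Thus the whole proposition reduces to this one formula for the product (\ref{CUP}).

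Next I would split the identity into its two components in $E_1^{p+r,q+s}(M)\oplus E_1^{p+r-1,q+s}(U\cup V)$. The first component is immediate: there $x\sm y$ contributes $\mu\sm\nu$ and $d_1(\mu\sm\nu)=d_1\mu\sm\nu+(-1)^{p+q}\mu\sm d_1\nu$ is exactly what the right-hand side produces, by the absolute derivation formula. All the content lies in the second component, where, with $\xi$ as in (\ref{CUP}) and $\xi'$, $\xi''$ the $\xi$-terms of $\delta x\sm y$ and $x\sm\delta y$ (obtained by feeding $\delta x=(d_1\mu,i_U^*\mu-d_1\alpha)$ and $\delta y=(d_1\nu,i_V^*\nu-d_1\beta)$ into (\ref{CUP})), one must prove
\[
i^*(\mu\sm\nu)-d_1\xi=\xi'+(-1)^{p+q}\xi'' ,
\]
with $i^*=(i_{U\cup V})_1^*$. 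I would move $d_1\xi$ to the other side and expand $\xi'+(-1)^{p+q}\xi''+d_1\xi$, pushing $d_1$ through $J$ and $\Delta$: by (\ref{DEFAULT2}), $d_1J=Jd_1-\Delta\pi$, while (\ref{DEFAULT1}) and $d_1^2=0$ give $d_1\Delta=-\Delta d_1$, so $d_1\xi$ splits into a $Jd_1$-piece, a $-\Delta\pi$-correction and a $\Delta d_1$-piece, each then expanded by the derivation formula.

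The resulting terms sort into two families. The $J$-valued terms recombine cleanly into $J\big((\mu\sm\nu)_U,(\mu\sm\nu)_V\big)=i^*(\mu\sm\nu)$, matching the left-hand side: the restriction parts $i_U^*\mu$ (from $\xi'$) and $i_V^*\nu$ (from $\xi''$) supply the two slots, using the linearity of $J$ and the identity $J(\lambda_U,\lambda_V)=\lambda_{U\cup V}$ for a global class $\lambda$ (a consequence of $Ji=\id$), while the $d_1\alpha$- and $d_1\beta$-parts cancel against the $Jd_1$-piece of $d_1\xi$. This is the clean half of the calculation. The hard part, and the step I expect to be the main obstacle, is the remaining $\Delta$-valued family: the $\Delta$-terms of $\xi'$ and $(-1)^{p+q}\xi''$, the $\Delta d_1$-piece of $d_1\xi$, and the $-\Delta\pi$-correction (with $\pi(\alpha\sm\nu,(-1)^{\mu}\mu\sm\beta)=\alpha\sm\nu-(-1)^{p+q}\mu\sm\beta$ on $U\cap V$) must cancel against one another. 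Here the anticommutation $d_1\Delta=-\Delta d_1$, the Koszul signs $(-1)^{\mu}$ and $(-1)^{r}$ of (\ref{CUP}), and the one-step degree shifts caused by $d_1\mu$ and $d_1\nu$ all interact, and it is precisely the demand that this family vanish that pins down the signs chosen in (\ref{CUP}). Nothing here is deep, but the bookkeeping of signs and of the many restrictions to $U$, $V$ and $U\cap V$ is unforgiving, so the heart of the proof is to organise this last cancellation carefully. For a fixed basic partition of unity these relations already make $\sm$ descend to $E_2$; that the product is moreover independent of the chosen partition is a separate and softer point.
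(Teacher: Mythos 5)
Your proposal takes essentially the same route as the paper: the paper likewise reduces Proposition \ref{CONMUTES} to showing that the product (\ref{CUP_E1})--(\ref{CUP}) is a morphism of complexes from the tensor-product complex with differential $d(A\otimes B)=d_1A\otimes B+(-1)^{A}A\otimes d_1B$ into $(E_1^{\bullet,q+s}(M,U\cup V),\delta)$, and verifies this with exactly your ingredients --- $J(\mu\sm\nu,\mu\sm\nu)=\mu\sm\nu$ (your $J(\lambda_U,\lambda_V)=\lambda$), $d_1J=Jd_1-\Delta\pi$ from (\ref{DEFAULT2}), the anticommutation $d_1\Delta=-\Delta d_1$, and the sign relations $(-1)^{\mu}=(-1)^{\alpha+1}$, $(-1)^{d\mu}=(-1)^{\mu+1}$. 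The only respect in which you stop short is that you leave the final cancellation of the $\Delta$-valued and $J$-valued families as an expectation, whereas the paper carries that bookkeeping to completion (both sides of the Leibniz identity are reduced to the common expression (\ref{MONSTER})); the paper's computation confirms that the cancellation closes up exactly as you predict.
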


\begin{proof} 
Although it is possible to check that by hand, we shall present a more
elaborate proof,
which has the advantage of being valid for any level $E_r$ where the key
ingredients
$\Delta$,
$J$ will be defined.

Since the map (\ref{CUP_E2}) has the form
$$ H^p(E_1^{\bullet,q}, d_1) \otimes H^r(E_1^{\bullet,s}, d_1) \to
H^{p+r}(E_1^{\bullet,q+s},d_1),$$
what we need is a map
$$H^{p+r}(E_1^{\bullet,q} \otimes E_1^{\bullet,s}) \to H^{p+r}(E_1^{\bullet,
q+s}),$$
so we must only check that the morphism
$$\sm \colon E_1^{\bullet,q}(M,U) \otimes E_1^{\bullet,s}(M,V) \to
E_1^{\bullet,q+s}(M, U
\cup V)$$
given in (\ref{CUP_E1}) commutes with the differentials, where as usual the
left complex
is endowed with the differential
$d(A\otimes B) = d_1A \otimes B + (-1)^A A \otimes d_1 B$.

Then we have
\begin{align*}
\delta \Big( (\mu,\alpha) \sm (\nu,\beta) \Big) &=  \\
\delta \Big(\left( \mu\sm \nu, J(\alpha \sm \nu, (-1)^\mu  \mu \sm \beta)
+(-1)^\mu  \Delta(\alpha \sm
\beta) \right) \Big) &=  \\
\Big( d_1(\mu \sm \nu), \mu \sm \nu - d_1 J(\alpha \sm \nu, (-1)^\mu \mu \sm
\beta)
- (-1)^\mu d_1\Delta (\alpha \sm \beta) \Big). &&
\end{align*}

Now by using that $J(\mu \sm \nu,\mu \sm \nu) = \mu \sm \nu$, that
$d_1J = Jd_1 - \Delta \pi$ and that $d_1\Delta = - \Delta d_1$, we have that the
first coordinate
is (we write $d=d_1$)
\begin{equation}\label{LEFT}
d\mu \sm \nu + (-1)^\mu \mu \sm d\nu
\end{equation}
while the second is
\begin{align*}
J(\mu\sm \nu, \mu \sm \nu)
- Jd(\alpha \sm \nu, (-1)^\mu \mu \sm \beta) + &&\\
\Delta\pi(\alpha \sm \nu, (-1)^\mu \mu \sm \beta)
+ (-1)^\mu \Delta d(\alpha \sm \beta)  &= \\
J\left( \mu \sm \nu - d(\alpha \sm \nu), \mu \sm \nu - (-1)^\mu d(\mu \sm
\beta)\right)
+ &&\\
 \Delta\pi(\alpha\sm \nu, (-1)^\mu \mu\sm \beta) + (-1)^\mu \Delta(d\alpha\sm
\beta + (-1)^\alpha\alpha
\sm d\beta),
\end{align*}
that is
\begin{align}\label{MONSTER}
J\Big( \mu \sm \nu - d\alpha \sm \nu -(-1)^\alpha\alpha \sm d\nu,
\mu \sm \nu - (-1)^\mu d\mu \sm \beta - \mu \sm d\beta \Big)
+&&\nonumber\\
 \Delta\pi(\alpha\sm \nu, (-1)^\mu \mu\sm \beta) + &&\\
(-1)^\mu \Delta(d\alpha\sm \beta + (-1)^\alpha \alpha \sm d\beta). \nonumber
\end{align}
On the other hand, $\cup d\left( ( \mu,\alpha)\otimes (\nu,\beta) \right)$
equals
\begin{align*}
d(\mu,\alpha) \sm (\nu,\beta) + (-1)^\mu (\mu,\alpha) \sm d(\nu,\beta)
&= \\
(d\mu, \mu -d\alpha) \sm (\nu, \beta) + (-1)^\mu (\mu,\alpha) \sm (d\nu, \nu -
d\beta)
&= \\
\Big(
d\mu \sm \nu, J\left(  (\mu -d\alpha) \sm \nu, (-1)^{d\mu }  d\mu \sm \beta
\right) +
(-1)^{d\mu } \Delta((\mu - d\alpha)\sm \beta)\Big) +&&\\
(-1)^\mu \Big( \mu \sm d\nu, J\left(\alpha \sm d\nu, (-1)^\mu \mu \sm
(\nu-d\beta)\right) +
(-1)^\mu \Delta(\alpha \sm (\nu -d\beta))\Big)
\end{align*}
whose first coordinate equals (\ref{LEFT}), while the second is
\begin{align*}
J\Big( \mu\sm \nu -d\alpha \sm \nu + (-1)^\mu \alpha\sm d\nu, (-1)^{d\mu }  d\mu
\sm \beta
+  \mu\sm \nu -  \mu \sm d\beta
 \Big) +&& \\
\Delta \Big( (-1)^{d\mu } \mu \sm \beta - (-1)^{d\mu }  d\alpha \sm \beta
+  \alpha\sm \nu -  \alpha\sm d\beta\Big),
\end{align*}
which equals (\ref{MONSTER}), because $(-1)^\mu = (-1)^{\alpha+1}$
and $(-1)^{d\mu} = (-1)^{\mu +1}$,
and we have done (!).\end{proof}

{\it Remark:\/} We have only  used the K\"unneth morphism 
$$H(E_U)\otimes H(E_V) \to
H(E_U\otimes E_V),$$
but in fact it is an isomorphism because the relative groups  $E_2(M,W)$ are
finite dimensional
for compact Hausdorff foliations on a compact manifold.

\section{Tangential LS category}
The following result is an easy consequence of our computation in Section
\ref{RELATIVE}.
It has been found independently by S. Hurder and H. Colman
\cite{COLMAN_HURDER_AMS}.

\begin{thm}\label{TANGENTIAL}
 Let $(M,\FF)$ be any foliated manifold. Then the tangential LS
category is bounded below by the length of the cup product in
$E_1^{\bullet,>0}=\oplus_{p\geq 0, q>0}E_1^{p,q}$, the
subalgebra of $E_1$ of cohomology classes with positive tangential degree.
\end{thm}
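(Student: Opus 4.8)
The plan is to run the cup-length argument of Theorem \ref{BASIC} one level lower in the spectral sequence, transcribing the relative machinery of Section \ref{RELATIVE} with the transverse and tangential degrees interchanged. Where the transverse (compact Hausdorff) result worked at the $E_1$ level and descended to $E_2$ through the differential $d_1$, which raises the transverse degree $p$, the tangential result will work directly at the $E_0$ level of forms and descend to $E_1$ through the differential $d_0$, which raises the tangential degree $q$. Recall that $E_1^{p,q}=H^q(\Omega^{p,\bullet},d_0)$, so what we are really manipulating is the leafwise cohomology with coefficients in $\Lambda^p\mathcal{N}^*$.

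First I would fix the homotopy ingredient. Repeating the proof of part (2) of Lemma \ref{HOMOTOPY}, but taking the extra parameter \emph{tangent} to the foliation (model $M'=M\times\R$ foliated by the leaves $L\times\R$, so that $\partial_t\in\Gamma T\FF'$), the operator $H\omega=\int_0^1 i_{\partial_t}\omega\,dt$ now lowers the tangential degree $q$ by one, and hence furnishes an algebraic homotopy already at the $E_0$ level. Thus a \emph{tangential} homotopy induces equal morphisms on each $E_1^{p,q}$. On the other hand, for a local transversal $T$ the induced foliation has $0$-dimensional leaves, so $\Lambda^q(T^*\FF)|_T=0$ and $E_1^{p,q}(T)=0$ for every $q>0$. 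Since the inclusion of a tangentially categorical open set $U$ is tangentially homotopic to a map factoring through such a transversal, it follows that the restriction $E_1^{p,q}(M)\to E_1^{p,q}(U)$ is the zero map whenever $q>0$.

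The heart of the matter is to produce a relative cup product at the $E_1$ level for an \emph{arbitrary} foliation. I would form the relative groups $E_1^{p,q}(M,W)$ as the cohomology of the mapping cone of $i_0^*$ on the $d_0$-complexes, now shifting the $W$-summand in the tangential degree $q$ rather than in $p$; this yields the long exact sequence of the pair at the $E_1$ level with no hypothesis at all. For the pairing I copy formula (\ref{CUP}) verbatim, except that the connecting operator is $\Delta(\omega)=[d\varphi_V]^{0,1}\wedge\omega$ on $U$ and $-[d\varphi_U]^{0,1}\wedge\omega$ on $V$, which now raises $q$ by one and is built from the \emph{leafwise} part $[d\varphi]^{0,1}\in\Omega^{0,1}$ of an \emph{ordinary} partition of unity, while $J=\id-S\pi$ as before. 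The identities $\pi S=\id$, $Ji=\id$, $J(\mu\sm\nu,\mu\sm\nu)=\mu\sm\nu$ and the defect relations (\ref{DEFAULT1})--(\ref{DEFAULT2}) all persist at the $E_0$ level, so the computation of Proposition \ref{CONMUTES} applies word for word with $d_1$ replaced by $d_0$ and shows the product descends to a well-defined, associative pairing $E_1^{p,q}(M,U)\otimes E_1^{r,s}(M,V)\to E_1^{p+r,q+s}(M,U\cup V)$ compatible with the absolute one.

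I expect the only genuine obstacle, and the precise reason the statement needs no compactness or Hausdorff assumption, to be the single chain-map identity $d_0\Delta=-\Delta d_0$ at the $E_0$ level. In Section \ref{RELATIVE} the analogous identity forced $d\varphi\in\Omega^{1,0}$, i.e. partitions of unity constant along the leaves; here, by contrast, $\Delta$ is wedging with $[d\varphi]^{0,1}$ and one only needs $d_0\big([d\varphi]^{0,1}\big)=0$. This is immediate: extracting the $(0,2)$-component of $d^2\varphi=0$ and noting that $[d\varphi]^{1,0}\in\Omega^{1,0}$ has no $(0,2)$-contribution gives $\big[d[d\varphi]^{0,1}\big]^{0,2}=0$, after which Leibniz yields $d_0\big([d\varphi]^{0,1}\wedge\omega\big)=-[d\varphi]^{0,1}\wedge d_0\omega$. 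With the relative product in hand the theorem closes by the standard argument: writing $k$ for the tangential category and covering $M$ by tangentially categorical $U_0,\dots,U_k$, the previous paragraphs let me lift classes $a_i\in E_1^{\bullet,>0}(M)$ to $\xi_i\in E_1^{\bullet,>0}(M,U_i)$, and then $\xi_0\sm\cdots\sm\xi_k\in E_1^{\bullet,>0}(M,U_0\cup\cdots\cup U_k)=E_1^{\bullet,>0}(M,M)=0$ forces $a_0\sm\cdots\sm a_k=0$, so $\lcp E_1^{\bullet,>0}\le k$.
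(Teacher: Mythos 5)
Your proposal is correct and takes essentially the same route as the paper's own (much more condensed) proof: integrable-homotopy invariance of $E_1$ via the tangential version of Lemma \ref{HOMOTOPY}, vanishing of the restriction in positive tangential degree for tangentially categorical sets, and the observation that the Mayer--Vietoris sequence is exact at the $E_0$ level for \emph{any} foliation, which lets the relative cup-product machinery of Section \ref{RELATIVE} run one level down, followed by the standard length argument. Your explicit check that $d_0\Delta=-\Delta d_0$ holds for the leafwise part $[d\varphi]^{0,1}$ of an ordinary partition of unity correctly isolates the point where the transverse case needed basic partitions (hence compact Hausdorff) while the tangential case needs nothing, a detail the paper leaves implicit.
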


\begin{proof} 
Roughly speaking, the tangential LS category \cite{COLMAN_MACIAS_London} is defined by means of open sets
which deform to a
transversal {\it along the leaves}. As it   is well known \cite{SUSO}, this kind of {\it integrable}
homotopy
is an invariant of the $E_1$ term of the spectral sequence (compare with our
Lemma
\ref{HOMOTOPY}), hence the inclusion of a tangentially categorical open set
vanishes in
cohomology for positive tangential degrees.

But on the other hand the Mayer-Vietoris sequence is always exact in the
$E_0$ term,
so it is possible to define the relative $E_1$ cohomology, and the standard
argument applies.
\end{proof}



\begin{thebibliography}{99}

\bibitem{SUSO} 
\'Alvarez L\'opez, J.~A.    {\em Sucesi\'on espectral asociada a foliaciones riemannianas}. Thesis.
     {Publ. Dpto. Geom. y Top.  72 , Univ. de Santiago de Compostela} (1987).
 

\bibitem{SUSO_Illinois} 
\'Alvarez L\'opez, J.~A.  
A finiteness theorem for the spectral sequence of a Riemannian foliation. 
{\it Ill. J. Math.} 33, No. 1 (1989).

\bibitem{BOTT_TU} 
		Bott, R.; Tu, L. {\em Differential forms in Algebraic Topology}. Graduate Texts in Maths. 82 (1982).
		
\bibitem{COLMAN_Thesis}
 Colman Vale, H. {\em Categor\'{\i}a LS en foliaciones}. Thesis. Publ. Dpto. Geom. y Top. {93}, Univ. de Santiago de Compostela (1998) 

\bibitem{COLMAN_Taia}
 Colman, H. Transverse Lusternik-Schnirelmann category of Riemannian foliations. {\em Topology Appl.} 141, No. 1-3, 187--196 (2004).

\bibitem{COLMAN_HURDER_Transactions}
 Colman, H.; Hurder, S. LS-category of compact Hausdorff foliations. {\em Trans. Am. Math. Soc.} 356, No. 4, 1463--1487 (2004). 
 
 \bibitem{COLMAN_HURDER_AMS} 
Colman, H.; Hurder, S. Tangential LS category and
cohomology for foliations. Cornea, O. (ed.) et al., {\em Lusternik-Schnirelmann category and related topics}. Proceedings of the 2001 AMS-IMS-SIAM joint summer research conference on Lusternik-Schnirelmann category in the new millennium, South Hadley, MA, USA, July 29-August 2, 2001. Contemp. Math. 316, 41--64 (2002).

\bibitem{COLMAN_MACIAS_Topology} 
Colman, H.; Macias-Virg\'os, E.
Transverse Lusternik-Schnirelmann category of foliated manifolds.
{\it Topology} 40, No. 2 (2001), 419--430.

\bibitem{COLMAN_MACIAS_London} 
		Colman, H.; Macias-Virg\'os, E. Tangential LS category of foliated manifolds, {\it J. London Math.
		Soc.} (2) 65 (2002), 745--756.

\bibitem{HECTOR_KACIMI} 
El Kacimi-Alaoui, A.;  Hector, G.
D\'ecomposition de Hodge sur l'espace des feuilles d'un feuilletage riemannien.   
C.~R.~Acad. Sci., Paris, S\'er. I 298 (1984), 289--292 .

\bibitem{EPSTEIN} 
Epstein, D.~B.~A. Foliations with all leaves compact, 
{\it Ann. Inst. Fourier Grenoble} 26 (1976), 265--282.

\bibitem{FOMENKO} 
			Fomenko, A.~T.;  Fuchs, D.~B.; Gutenmacher, V.~L. {\it Homotopic Topology}, Akademiai Kiado, Budapest, 1986.

\bibitem{HATTORI} 
			Hattori, A. Spectral sequence in the de Rham cohomology of fiber
				bundles, {\it J. Fac. Sci. Univ. Kyoto} (1960).

\bibitem{HURDER_Taia} 
 Hurder, S. Category and compact leaves. {\em Topology Appl.} 153, No. 12 (2006), 2135--2154.
 
\bibitem{HURDER_WALCZAK} 
Hurder, S.; Walczak, P.~G. Compact foliations with finite transverse LS category. Preprint 2005.

\bibitem{LENGTH_James} 
  James, I.~M.
On category, in the sense of Lusternik-Schnirelmann.
{\em Topology} 17 (1978)  331--348.

\bibitem{LANGEVIN_WALCZAK} 
Langevin, R.; Walczak, P.  Transverse Lusternik-Schnirelmann category and non-proper leaves. In Walczak, Pawel (ed.) et al., {\em Foliations: geometry and dynamics}. Proceedings of the Euroworkshop, Warsaw, Poland, May 29-June 9, 2000. World Scientific. 351--354 (2002).

\bibitem{MACIAS} 
Macias-Virg\'os, E. Las cohomolog\'{\i}as diferenciable,
continua y discreta de una va\-rie\-dad foliada, Thesis. {\it Publ. Dpto. Geom. y Top.} 60. Santiago 1983.

\bibitem{MASA} 
Masa, X. Sucesi\'on espectral de cohomolog\'{\i}a asociada a variedades foliadas. Aplicaciones. Thesis. {\it Publ. Dpto. Geom. y Top.}    Santiago, 1973.  

\bibitem{MUNKRES} 
Munkres, James R. Elements of algebraic topology. Addison-Wesley Publishing Company, Menlo
Park, CA, 1984. 

\bibitem{SARKARIA} 
 Sarkaria, K.~S. A finiteness theorem for foliated manifolds, {\it J. Math. Soc. Japan}  30 (1978).

\bibitem{SELICK} 
Selick, P. {\it Introduction to Homotopy Theory}. Fields Institute
Monographs, A.M.S. Providence RI, 1991.

\bibitem{SERGIESCU_Spectral} 
 Sergiescu, V. Sur la suite spectrale d'un feuilletage riemannien. 
 Mem. 19th Natl. Conf., Soc. Mat. Mex., Vol. II, Guadalajara/Mex. 1986, {\it Aportaciones Mat.}, Comun. 4
(1987) 33--39 .

\bibitem{SINGHOF_VOGT} 
Singhof, W.; Vogt, E. Tangential category of foliations
{\em Topology} 42, No. 3 (2003)  603--627.

\bibitem{VAISMAN} 
 Vaisman, I. {\it Cohomology and Differential Forms}, M.~Dekker, Inc., New York, 1973.

\bibitem{WOLAK} 
 Wolak, R. A remark on a paper of H. Colman and E. Macias-Virg\'os on transverse Lusternik-Schnirelmann category. Preprint 2001.

\end{thebibliography}
\end{document}